\newtheorem{remark}{\textbf{Remark}}%[section]
\newtheorem{Theorem}{\textbf{Theorem}}%[section]
\newtheorem{Corollary}{\textbf{Corollary}}%[section]
\numberwithin{equation}{section}
\def\bk{{\mathbf k}} 
\def\bw{{\mathbf w}} 
\def\bx{{\mathbf x}}
\def\bz{\mathbf z}
\def\bI{\mathbf  I}
\def\bV{\mathbf  V}
\def\bW{\mathbf  W}
\def\bZ{\mathbf  Z}
\def\mC{\mathcal{C}}
\def\mE{\mathcal{E}}
\def\mH{\mathcal{H}}
\def\mK{\mathcal{K}}
\def\mL{\mathcal{L}}
\def\V{\mathcal{V}}
\def\Z{\mathcal{Z}}
\def\bC{\boldsymbol{\mC}}
\def\bK{\boldsymbol{\mK}}
\def\bZ{\boldsymbol{\Z}}
\def\bV{\boldsymbol{\V}}
\def\bv{\boldsymbol{\nu}}
\def\aa{\alpha}
\def\bb{\beta}
\def\a{\alpha_}
\def\b{\beta_}
\def\ahat{\widehat{\aa}_}
\def\bhat{\widehat{\bb}_}
\def\mr{m_r}
\def\nr{n_r}
\def\bzeros{\boldsymbol{0}}
\def\bones{\boldsymbol{1}}
\def\bpsi{\boldsymbol{\psi}}
\def\phat{\widehat{\bpsi}}
\def\pp{\partial}
\def\xi{x_{(i)}}
\def\no{\nonumber}
\def\ll{\left(}
\def\rr{\right)}
\title{\bf Inference for Two Lomax Populations Under Joint Type-II Censoring}
\author[1]{Yasin Asar\thanks{Corresponding Author}}
\author[2]{R.Arabi Belaghi}
\affil[1]{Department of Mathematics--Computer, Necmettin Erbakan University, Konya, Turkey, yasar@erbakan.edu.tr, yasinasar@hotmail.com}
\affil[2]{Department of Statistics, Faculty of Mathematical Sciences, University of Tabriz, Tabriz, Iran, rezaarabi11@gmail.com}
\begin{document}
\maketitle
\setcounter{page}{1}
\setcounter{equation}{0}
\setcounter{figure}{0}
\setcounter{table}{0}

\noindent{\bf{Abstract:}} Lomax distribution has been widely used in economics, business and actuarial sciences. Due to its importance, we consider the statistical inference of  this model under joint type-II censoring scenario. In order to estimate the parameters, we derive the Newton-Raphson(NR) procedure and we observe that most of the times in the simulation NR algorithm does not converge. Consequently, we make use of the expectation-maximization (EM) algorithm. Moreover, Bayesian estimations are also provided based on squared error, linear-exponential and generalized entropy loss functions together with the importance sampling method due to the structure of posterior density function. In the sequel, we perform a Monte Carlo simulation experiment to compare the performances of the listed methods. Mean squared error values, averages of estimated values as well as coverage probabilities and average interval lengths are considered to compare the performances of different methods. The approximate confidence intervals, bootstrap-p and bootstrap-t confidence intervals are computed for EM estimations. Also, Bayesian coverage probabilities and credible intervals are obtained. Finally, we consider the Bladder Cancer data to illustrate the applicability of the methods covered in the paper.
%%%%%%%%%%%%%%%%%%%%%%%%%%%
\vspace{0.2cm}

\noindent {{\bf Keywords:}} Bootstrap confidence intervals; EM algorithm; Bayesian estimation; Type-II censoring; Lomax distribution; Joint censoring scheme; Maximum likelihood estimation
\vskip 6mm
\noindent {\bf AMS SUBJECT CLASSIFICATION:} 62F10; 62N01; 62N05.

\section{Introduction}
\label{Intro}
One sample problems have been well studied under different kinds of censoring schemes in the literature. When the experimenter wants to compare two populations that have been produced by two lines within the same facility, the joint censoring scheme has been developed so that the experimenter saves both time and money. 
More precisely, let us consider the two samples of sizes $m$ and $n$ selected from these two lines and put into a life testing experiment at once. Due to the purposes of experimental necessities, the experimenter can terminate the experiment as soon as a pre-determined number of failures occurs, say $r$, $1\leq r \leq N$ and $N=m+n$. 

We refer to the following studies considering the joint censoring in the literature:
\citet{basu} proposed a statistics which is the asymptotically most powerful rank test for censored data and it is equivalent to the Savage statistics. Locally most powerful rank test was considered by \cite{JM}. These studies and some others was reviewed in \citet{bala1995}. \citet{bala2008} studied the exact inference for jointly distributed two exponential distributions under the setting of type-II censoring. Extending this study, \citet{bala2010} discussed the exact inference for the two exponential distribution under joint type-II progressive censoring. \citet{parsi} developed the interval estimation for two Weibull distributions under joint Type-II progressive censoring. \citet{doost} studied the Bayesian estimation using squared error loss (SEL) and linear-exponential (LINEX) loss functions for a general class of distributions and discussed the Weibull distribution under jointly progressive type-II censoring scheme. \citet{shafay} considered the two exponential populations under the jointly type-II censored setting and developed the Bayesian inference for the unknown parameters using SEL, LINEX and general entropy loss (GEL) functions. Recently, \citet{kundu2017} studied the point and interval estimation of Weibull distribution under jointly progressive type-II censoring scheme using both likelihood and Bayesian estimations. Recently, \citet{volterman} developed inference for two exponential populations based on joint record values.

However, we consider the estimation of unknown parameters of two Lomax distributions under joint type-II censoring scheme in this study. Now, assume that $X_1, X_2, \ldots, X_m$ are the independent and identically distributed (i.i.d.) random variables representing the lifetimes of the first product and similarly $Y_1, Y_2, \ldots, Y_n$ are i.i.d. random variables from the other product having the following Lomax probability density functions (pdf) and cumulative distribution functions (cdf) respectively 
%%%%%%%%%%%%%%%%%%%%%%%%%
\begin{eqnarray}\label{pdf:Lomax}
&&f_i\left(x; \aa_i, \bb_i\right) = \aa_i \bb_i (1+\bb_i x)^{-\aa_i-1},~ x>0\\
\label{cdf:lomax}
&&F_i\left(x; \aa_i, \bb_i \right)=1-(1+\bb_i x)^{-\aa_i},~ x>0
\end{eqnarray}
where $\alpha_i, \beta_i>0$ for $i=1, 2$ are the shape and scale parameters, respectively. Let us also suppose that $W_1<W_2<\ldots<W_N$ represents the order statistics of the random variables $\{X_1, \ldots, X_m; Y_1, \ldots, Y_n\}$. The observed data can be denoted by $(\bV,\bW)$ where $\bW=\ll W_1, W_2, \ldots, W_r \rr$ and $\bV=\ll \V_1, \V_2, \ldots, \V_r \rr$ such that $\V_i=1$ if $W_i$ is from X-component and $\V_i=0$ if $W_i$ is from Y-component.
%%%%%%%%%%%%%%%%%%

The rest of the paper is organized as follows: In Section \ref{MLE}, the maximum likelihood estimators (MLE) of the parameters are introduced by using Newton--Raphson (NR) algorithm and some theorems are developed regarding the exact distributions. Moreover, the expectation--maximization (EM) algorithm is also considered to obtain the MLEs, and the Fisher information matrix is obtained. In Section \ref{Bayes}, Bayes estimation for the unknown parameters of two Lomax distributions under the assumption of independent gamma priors using different loss functions such as SEL, LINEX and GEL functions. Moreover,  we performed a Monte Carlo simulation experiment to compare the efficiencies of these methods and discussed the results in Section \ref{sim}. A real data example is presented in Section \ref{RealData} to illustrate the findings of the study. Finally, some conclusive remarks are given in Section \ref{conclusion}.

%%%%%%%%%%%%%%%%%%%%%%%%%%%
%%%%%%%%%%%%%%%%%%%%%%%%%%%
\section{ Likelihood Inferences}\label{MLE}
In this section we consider the maximum likelihood inferences. We drive the likelihood equations and give the exact distribution of number of $\bV$.
Let $M_r=\sum_{i=1}^r\V_i$ and $M_r=\sum_{i=1}^r (1-\V_i)$ be the number of $X$ and $Y$ failures in $\bW$ respectively, such that $r=M_r+N_r$.
Then the likelihood function of the observable data $(\bV,\bW)$ is given by
\begin{IEEEeqnarray}{rCl}
\label{likelihood:data}
L(\aa_1, \bb_1,\aa_2, \bb_2, \bv,\bw) &=&\frac{m!n!}{(m-m_r)!(n-n_r)!} \prod_{i=1}^{r}f_1\ll w_i; \aa_1, \bb_1\rr ^{\nu_i}f_2\ll w_i; \aa_2, \bb_2\rr ^{1-\nu_i}\no\\
&&\times\left[1-F_1(w_r;\aa_1, \bb_1)\right]^{m-m_r}\left[1-F_2(w_r;\aa_2, \bb_2)\right]^{n-n_r} 
\end{IEEEeqnarray}
where $0<w_1<w_2<\ldots<w_r<\infty$.

Based on the observed data, the corresponding log-likelihood function without the additive constant can be expressed as
\begin{IEEEeqnarray}{rCl}
\label{log-likelihood:data}
l\left(\a1,\b1,\a2,\b2, \bv,\bw \right)&=&\mr\left[\ln(\a1)+\ln(\b1)\right]+\nr\left[\ln(\a2)+\ln(\b2)\right]-(\a1+1)\sum_{i=1}^r\nu_i\ln(1+\b1w_i)\no\\&&-\a1(m-\mr)\ln(1+\b1w_r)
-(\a2+1)\sum_{i=1}^r(1-\nu_i)\ln(1+\b2w_i)\no\\
&&-\a2(n-\nr)\ln(1+\b2w_r).
\end{IEEEeqnarray}
Taking the partial derivatives of Equation \eqref{log-likelihood:data} with respect to the parameters and equating them to zero, one can obtain the following normal equations respectively
\begin{IEEEeqnarray}{rCl}
\label{ll:a1}
\frac{\partial l}{\partial \a1} &=& \frac{\mr}{\a1}-\sum_{i=1}^r\nu_i\ln(1+\b1w_i)-(m-\mr)\ln(1+\b1w_r)=0\\
\label{ll:b1}
\frac{\partial l}{\partial \b1} &=&\frac{\mr}{\b1}-(\a1+1)\sum_{i=1}^r \frac{\nu_i w_i}{1+\b1w_i}-\frac{\a1(m-\mr)w_r}{1+\b1w_r}=0\\
\label{ll:a2}
\frac{\partial l}{\partial \a2} &=& \frac{\nr}{\a2}-\sum_{i=1}^r(1-\nu_i)\ln(1+\b2w_i)-(n-\nr)\ln(1+\b2w_r)=0\\
\label{ll:b2}
\frac{\partial l}{\partial \b2} &=&\frac{\nr}{\b2}-(\a2+1)\sum_{i=1}^r \frac{(1-\nu_i) w_i}{1+\b2w_i}-\frac{\a2(n-\nr)w_r}{1+\b2w_r}=0
\end{IEEEeqnarray}
Upon solving Equations \eqref{ll:a1} and \eqref{ll:a2}, we readily get the followings
\begin{IEEEeqnarray}{rCl}
\label{nr:a1}
\a1(\b1)&=& \left\{\frac{1}{\mr}\left[ \sum_{i=1}^r\nu_i\ln(1+\b1w_i)+(m-\mr)\ln(1+\b1w_r)\right]\right\}^{-1}\\
\label{nr:a2}
\a2(\b2)&=& \left\{\frac{1}{\nr}\left[ \sum_{i=1}^r(1-\nu_i)\ln(1+\b2w_i)+(n-\nr)\ln(1+\b2w_r)\right]\right\}^{-1}
\end{IEEEeqnarray}
and plugging in these into Equations \eqref{ll:b1} and \eqref{ll:b2}, we obtain the following one dimensional optimization problems respectively
\begin{IEEEeqnarray}{rCl}
\frac{\mr}{\b1}&=&(\a1+1)\sum_{i=1}^r \frac{\nu_i w_i}{1+\b1w_i}-\frac{\a1(\b1)(m-\mr)w_r}{1+\b1w_r}\\
\frac{\nr}{\b2}&=& (\a2(\b2)+1)\sum_{i=1}^r \frac{(1-\nu_i) w_i}{1+\b2w_i}-\frac{\a2(n-\nr)w_r}{1+\b2w_r}.
\end{IEEEeqnarray}
\begin{remark}
From Equations \eqref{nr:a1} and \eqref{nr:a2}, it is obvious that the MLEs of $(\a1, \b1)$ and $(\a2,\b2)$ do not exist when $\sum_{i=1}^r\V_i=0$ or $r$. Therefore, these MLEs are only conditional MLEs which are conditioned on $1 \leq \V_r \leq r-1$. 
\end{remark}
In the following theorem we give the exact distribution of $\bV$.
\begin{Theorem}\label{Thm:pmf:V}
When $\b1=\b2$, the joint probability mass function of $\bV$ is
\begin{eqnarray}\label{pmf:V}
P\ll \bV=\bv \rr = m^{(m_r)} n^{(n_r)}\a1^{\mr} \a2^{\nr}\prod_{i=1}^r\frac{1}{\a1(m-m_{i-1})+\a2(n-n_{i-1})}
\end{eqnarray}
such that $\mH=\lbrace \bv=(\nu_1, \nu_2, ..., \nu_r): \nu=0 ~or~ 1\rbrace$ where $m^{(t)}=m!/(m-t)!$, $m_{j-1}=\sum_{i=1}^{j-1}\nu_i$, $n_{j-1}=\sum_{i=1}^{j-1}(1-\nu_i)$ and $m_0\equiv n_0\equiv 0$. 
\end{Theorem}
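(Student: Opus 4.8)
The plan is to obtain $P(\bV=\bv)$ by marginalizing the joint density of $(\bV,\bW)$ over the unobserved failure times: I would integrate the right-hand side of \eqref{likelihood:data}, specialized to $\beta_1=\beta_2=\beta$, over the region $0<w_1<\cdots<w_r<\infty$. The essential simplification coming from $\beta_1=\beta_2$ is that every factor of the integrand is a power of the single quantity $1+\beta w_i$. Writing $g_i=\nu_i\alpha_1+(1-\nu_i)\alpha_2$ for the ``rate'' attached to the $i$-th failure and $A=\alpha_1(m-m_r)+\alpha_2(n-n_r)$ for the censoring exponent, the integrand collapses to $\alpha_1^{m_r}\alpha_2^{n_r}\beta^r\prod_{i=1}^{r-1}(1+\beta w_i)^{-g_i-1}(1+\beta w_r)^{-g_r-1-A}$. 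Equivalently, since the Lomax hazard is $\alpha_i\beta/(1+\beta w)$, the hypothesis $\beta_1=\beta_2$ is exactly a proportional-hazards condition, which is the structural reason $\bV$ ends up independent of $\bW$.

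Next I would apply the substitution $t_i=(1+\beta w_i)^{-1}$, which is a bijection of $\{0<w_1<\cdots<w_r<\infty\}$ onto $\{0<t_r<\cdots<t_1<1\}$ with $|dw_i|=\beta^{-1}t_i^{-2}\,dt_i$. The $r$ Jacobian factors contribute $\beta^{-r}$, which cancels the $\beta^r$ produced by the $r$ density factors, so $\beta$ drops out entirely (a reassuring check, since the target formula \eqref{pmf:V} is free of $\beta$). After the change of variables the integral reduces to $\alpha_1^{m_r}\alpha_2^{n_r}\int_{0<t_r<\cdots<t_1<1}\prod_{i=1}^{r-1}t_i^{\,g_i-1}\,t_r^{\,g_r+A-1}\,dt_1\cdots dt_r$ over an ordered region of the unit cube.

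The core computation is then an iterated integration taken innermost variable first. Integrating $t_r$ over $(0,t_{r-1})$ gives $t_{r-1}^{\,g_r+A}/(g_r+A)$; this raises the exponent of $t_{r-1}$, whose integral over $(0,t_{r-2})$ contributes a factor $1/(g_{r-1}+g_r+A)$, and so on, with the convention $t_0\equiv1$ at the final step. The integral therefore telescopes into $\prod_{k=1}^{r}1/D_k$ where $D_k=\sum_{i=k}^{r}g_i+A$. To finish, I would identify $D_k$ using $\sum_{i=k}^{r}\nu_i=m_r-m_{k-1}$ and $\sum_{i=k}^{r}(1-\nu_i)=n_r-n_{k-1}$, which together with the definition of $A$ collapse $D_k$ to $\alpha_1(m-m_{k-1})+\alpha_2(n-n_{k-1})$. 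Since the combinatorial prefactor $m!\,n!/[(m-m_r)!(n-n_r)!]$ is precisely $m^{(m_r)}n^{(n_r)}$, reassembling the pieces reproduces \eqref{pmf:V} exactly (the index $k$ matching the product index $i$ there).

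I expect the main difficulty to be organizational rather than analytic: keeping the nested limits and the accumulating exponents straight across the telescoping, and then verifying the two counting identities that recast the partial sums $\sum_{i=k}^{r}g_i+A$ into the ``number still at risk'' form $\alpha_1(m-m_{k-1})+\alpha_2(n-n_{k-1})$. As an independent cross-check I would note the sequential argument: by proportional hazards, the conditional probability that the $i$-th failure is of $X$-type given the history is $\alpha_1(m-m_{i-1})/[\alpha_1(m-m_{i-1})+\alpha_2(n-n_{i-1})]$, free of the failure times, and multiplying these conditional probabilities along $\bv$ yields the same product, with the descending factorials $m^{(m_r)}$ and $n^{(n_r)}$ arising from the numerators $(m-m_{i-1})$ and $(n-n_{i-1})$ accumulated over the $X$- and $Y$-failures respectively.
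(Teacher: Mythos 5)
Your proposal is correct and follows essentially the same route as the paper: marginalize the joint density of $(\bV,\bW)$ under $\beta_1=\beta_2$ by iterated integration over $0<w_1<\cdots<w_r<\infty$ starting with $w_r$, so that the integral telescopes into the product $\prod_{k=1}^{r}\bigl[\alpha_1(m-m_{k-1})+\alpha_2(n-n_{k-1})\bigr]^{-1}$. Your substitution $t_i=(1+\beta w_i)^{-1}$ and the at-risk counting identities simply make explicit the steps the paper compresses into ``after some algebra'' and ``integrating \dots in a similar manner,'' and the proportional-hazards cross-check is a nice sanity check but not a different proof.
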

\begin{proof}
When $\b1=\b2=\bb$, from Equation \eqref{likelihood:data}, the joint probability function of $(\bV,\bW)$ becomes
\begin{IEEEeqnarray}{rCl}
\label{joint:pdf}
f(\bv,\bw)&=&\frac{m!n!}{(m-m_r)!(n-n_r)!}\a1^{\mr} \a2^{\nr}\bb^r\ll1+\bb w_r \rr^{-\a1(m-\mr)-\a2(n-\nr)}\no\\
&&\times\prod_{i=1}^r \ll1+\bb w_i \rr^{-(\a1+1)\nu_i-(\a2+1)(1-\nu_i)}
,~0<w_1<w_2<\ldots<w_r<\infty.
\end{IEEEeqnarray}
%Taking $w_r$ out of the product and combining it with the other term having $w_r$ in it, \eqref{joint:pdf} becomes
%\begin{eqnarray*}
After some algebra, we readily obtain the following
\begin{IEEEeqnarray*}{rCl}
f(\bv,\bw)&=&\frac{m!n!}{(m-m_r)!(n-n_r)!}\a1^{\mr} \a2^{\nr}\bb^r \ll1+\bb w_r \rr^{-\a1(m-m_{r-1})-\a2(n-n_{r-1})}\\
&&\times\prod_{i=1}^{r-1} \ll1+\bb w_i \rr^{-(\a1+1)\nu_i-(\a2+1)(1-\nu_i)}.
\end{IEEEeqnarray*}
Now, integrating the above function with respect to $w_r$ and continuing in a similar manner with $w_{r-1}$, $w_{r-2}$,..., $w_1$, one finally gets the desired probability mass function given in \eqref{pmf:V}. 
\end{proof}
\begin{Corollary}\label{cor:pmv:V}
It is directly seen from Theorem \ref{Thm:pmf:V} that the probability mass function of $M_r=\sum_{i=1}^r \V_i$ is as follows
\begin{eqnarray*}
P(M_r=i)=\underset{\bv \in \mH}{\sum \ldots \sum} m^{(m_r)} n^{(n_r)}\a1^{\mr} \a2^{\nr}\prod_{i=1}^r\frac{1}{\a1(m-m_{i-1})+\a2(n-n_{i-1})}
\end{eqnarray*}
for $\mH=\lbrace \bv=(\nu_1, \nu_2, ..., \nu_r): \nu_j=0 ~or~ 1,  \sum_{j=1}^r\nu_j=i\rbrace$ such that $i=1,2,...,r$.
\end{Corollary}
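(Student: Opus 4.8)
The plan is to derive the law of $M_r$ purely by marginalizing the joint probability mass function of $\bV$ already obtained in Theorem~\ref{Thm:pmf:V}, so that no genuinely new computation is required. First I would note that $M_r=\sum_{i=1}^r\V_i$ is a fixed deterministic function of the discrete random vector $\bV$; hence the event $\lbrace M_r=i\rbrace$ is exactly the event that $\bV$ lands in the level set $\mH=\lbrace \bv:\nu_j\in\lbrace 0,1\rbrace,\ \sum_{j=1}^r\nu_j=i\rbrace$. Since distinct binary vectors are mutually exclusive outcomes of $\bV$, this level set expresses $\lbrace M_r=i\rbrace$ as a finite disjoint union of the atoms $\lbrace \bV=\bv\rbrace$ with $\bv\in\mH$.

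Second, I would invoke finite additivity of the probability measure to write
\[
P(M_r=i)=\sum_{\bv\in\mH}P\ll \bV=\bv\rr,
\]
and then substitute the closed form of $P(\bV=\bv)$ from Equation~\eqref{pmf:V} directly into the summand. The multi-index symbol $\sum\ldots\sum$ in the statement is merely shorthand for this single sum ranging over all $\bv\in\mH$, that is, over all $r$-tuples of zeros and ones whose entries add to $i$; this explains why the summand is transcribed verbatim from Theorem~\ref{Thm:pmf:V}.

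There is essentially no analytical obstacle here: the corollary is a one-line consequence of finite additivity, and no simplification of the inner sum is attempted, since the $\binom{r}{i}$ admissible configurations generally do not collapse into a tractable closed form. The only point requiring care is the bookkeeping of the support, namely that the conditional setup of the Remark discards the degenerate configuration with no $X$-failures, so that the index $i$ starts at $1$ and the stated range $i=1,2,\ldots,r$ records precisely the attainable values of $M_r$.
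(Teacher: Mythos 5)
Your argument is correct and is exactly the route the paper takes: the corollary is obtained by summing the joint pmf of Theorem~\ref{Thm:pmf:V} over the level set $\mH$ of binary vectors with $\sum_{j=1}^r\nu_j=i$, i.e.\ by finite additivity over the atoms $\lbrace\bV=\bv\rbrace$. (Your closing remark about the range of $i$ is the only slightly off note: $M_r=0$ is attainable --- the paper's next corollary computes $P(M_r=0)$ explicitly --- so the restriction to $i\ge 1$ in the statement is a convention tied to the conditional MLE setting, not a statement about the support of $M_r$.)
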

\begin{Corollary}
From Corollary \ref{cor:pmv:V}, it is readily seen that the following simplified equations hold
\begin{eqnarray*}
&&P(M_r=0)=P(\bV=\bzeros)=\frac{n!}{(n-r)!}\a2^r \prod_{i=1}^r\frac{1}{m\a1+\a2(n-i+1)},\\
&&P(M_r=r)=P(\bV=\bones)=\frac{m!}{(m-r)!}\a1^r \prod_{i=1}^r\frac{1}{\a1(m-i+1)+n\a2}.
\end{eqnarray*}
\end{Corollary}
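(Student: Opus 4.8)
The plan is to recognize that both identities are nothing more than the joint pmf of Theorem~\ref{Thm:pmf:V} evaluated at a single, explicitly known realization of $\bV$. First I would note that since $M_r=\sum_{i=1}^r\V_i$ is a sum of $r$ zero/one indicators, the event $\{M_r=0\}$ is realized only by the all-zero vector $\bv=\bzeros$, and $\{M_r=r\}$ only by the all-one vector $\bv=\bones$. In each of these two extreme cases the index set $\mH$ appearing in Corollary~\ref{cor:pmv:V} is therefore a singleton, the multiple sum collapses to a single summand, and we get at once $P(M_r=0)=P(\bV=\bzeros)$ and $P(M_r=r)=P(\bV=\bones)$. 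It then suffices to substitute these two vectors into \eqref{pmf:V}.

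For the first identity I would set $\nu_i=0$ for all $i$ in \eqref{pmf:V}. This gives $\mr=\sum_{i=1}^r\nu_i=0$ and $\nr=\sum_{i=1}^r(1-\nu_i)=r$, so the leading factors reduce to $m^{(0)}=1$, $n^{(r)}=n!/(n-r)!$, $\a1^{0}=1$ and $\a2^{r}$. For the product, the running counts become $m_{i-1}=0$ and $n_{i-1}=i-1$ for every $i$, so each denominator $\a1(m-m_{i-1})+\a2(n-n_{i-1})$ simplifies to $m\a1+\a2(n-i+1)$. Collecting these pieces reproduces exactly the claimed expression for $P(M_r=0)$.

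The second identity follows by the symmetric computation with $\nu_i=1$ for all $i$. Now $\mr=r$ and $\nr=0$, so the leading factors are $m^{(r)}=m!/(m-r)!$, $n^{(0)}=1$, $\a1^{r}$ and $\a2^{0}=1$; the running counts are $m_{i-1}=i-1$ and $n_{i-1}=0$, so each denominator becomes $\a1(m-i+1)+n\a2$, and assembling the factors yields the stated formula for $P(M_r=r)$.

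No genuine obstacle arises here: the result is a direct evaluation rather than a new argument, and the only point demanding care is the index bookkeeping inside the product---tracking that $m_{i-1}$ and $n_{i-1}$ are the partial counts through index $i-1$ (with the convention $m_0\equiv n_0\equiv 0$), so that the shifts $n-n_{i-1}=n-(i-1)=n-i+1$ and $m-m_{i-1}=m-i+1$ land correctly on the running index.
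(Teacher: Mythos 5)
Your proposal is correct and is exactly the computation the paper has in mind: the paper offers no separate argument, asserting the identities are ``readily seen'' from Corollary \ref{cor:pmv:V}, and your evaluation of the pmf in \eqref{pmf:V} at the two singleton realizations $\bv=\bzeros$ and $\bv=\bones$, with the bookkeeping $m_{i-1}=0$, $n_{i-1}=i-1$ (respectively $m_{i-1}=i-1$, $n_{i-1}=0$), is precisely that verification.
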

%%%%%%%%%%%%%%%%%%%%%%%%%%%%%%%%%%%%
\subsection{Expectation-Maximization Algorithm}

The EM algorithm proposed by \citet{em1977} can be used to obtain the MLEs of the parameters $\alpha_i$ and $\beta_i$, $i=1,2$. It is known that the EM algorithm converges more reliably than NR.
Since type-II joint censoring scheme may be considered as a problem of missing data (see \citet{Ng2002}), it is possible to apply EM algorithm to obtain the MLEs of the parameters.

Now, we denote the incomplete (censored or missing) data by $\ll \bK, \bZ\rr$ where $\bK=\ll \mK_1,..., \mK_{N-r}\rr$ and $\bZ=\left(Z_1, ..., Z_{N-r} \right)$ such that $\mK_i=1$ if the censored observation $\Z_i$ is in $X$ and $\mK_i=0$ if $\Z_i$ is in the sample $Y$. It is readily seen that $\sum_{i=1}^{N-r}\mK_i=m-\mr$ and $\sum_{i=1}^{N-r}(1-\mK_i)=n-\nr$.
Upon combining both the observed and missing data, we denote the complete data as $\bC =\ll \bV, \bW, \bK,\bZ \rr$. The corresponding likelihood equation of the complete data can be written as
%%%%%
\begin{IEEEeqnarray}{rCl}
\label{likelihood:complete}
L_{\bC}(\aa_1, \bb_1,\aa_2, \bb_2, \bv,\bw, \bk, \bz) &=& \prod_{i=1}^{r}f_1\ll w_i; \aa_1, \bb_1\rr ^{\nu_i}f_2\ll w_i; \aa_2, \bb_2\rr ^{1-\nu_i} \no\\
&&\times \prod_{j=1}^{N-r}f_1\ll z_j; \a1, \b1\rr ^{k_j}f_2\ll z_j; \a2, \b2\rr ^{1-k_j}.
\end{IEEEeqnarray}
Therefore, the log-likelihood equation can be easily obtained by taking the natural logarithm of Equation \eqref{likelihood:complete} as follows:
%%%%%
\begin{IEEEeqnarray}{rCl}
\label{log-like:complete}
l_{\bC}&=&m(\ln(\a1)+ln(\b1))+n(\ln(\a2)+\ln(\b2)) - (\a1+1)\left\{\sum_{i=1}^r\nu_i\ln(1+\b1w_i)+\sum_{j=1}^{N-r}k_j\ln(1+\b1z_j) \right\}\no\\
&&- (\a2+1)\left\{\sum_{i=1}^r(1-\nu_i)\ln(1+\b2w_i)+\sum_{j=1}^{N-r}(1-k_i)\ln(1+\b2z_i) \right\}.
\end{IEEEeqnarray}
Based on the complete sample, the MLEs of the parameters $\a1, \b1, \a2$ and $\b2$ can be computed by taking the partial derivatives of Equation \eqref{log-like:complete} with respect to these parameters respectively and equating them to zero as follows:
\begin{eqnarray}
\label{diff:a1:comp}
&&\frac{\partial l_{\bC}}{\partial \a1} = \frac{m}{\a1}-\sum_{i=1}^r\nu_i\ln(1+\b1w_i)-\sum_{j=1}^{N-r}k_j\ln(1+\b1z_j)=0\\
%%%%%%%%%%%%%%%%%%%%%%
\label{diff:b1:comp}
&&\frac{\partial l_{\bC}}{\partial \b1} = \frac{m}{\b1}- (\a1+1)\left\{\sum_{i=1}^r\frac{\nu_iw_i}{1+\b1w_i}+\sum_{j=1}^{N-r}\frac{k_jz_j}{1+\b1z_j }\right\}=0 \\
\label{diff:a2:comp}
&&\frac{\partial l_{\bC}}{\partial \a2} = \frac{n}{\a2}-\sum_{i=1}^r(1-\nu_i)\ln(1+\b2w_i)-\sum_{j=1}^{N-r}(1-k_j)\ln(1+\b2z_j)=0\\
%%%%%%%%%%%%%%%%%%%%%%%%
\label{diff:b2:comp}
&&\frac{\partial l_{\bC}}{\partial \b2} = \frac{m}{\b2}- (\a2+1)\left\{\sum_{i=1}^r\frac{(1-\nu_i)w_i}{1+\b2w_i}+\sum_{j=1}^{N-r}\frac{(1-k_j) z_j}{1+\b2 z_j}\right\}=0. 
\end{eqnarray}
Now, in the E-step of the EM algorithm, we need the conditional expectations of the normal equations. Thus, any function of the random variable $\bZ$ should be replaced by its expectation. Therefore, Equations \eqref{diff:a1:comp}-\eqref{diff:b2:comp} become, respectively 
\begin{eqnarray*}
\label{cond:expect:a1}
&&E\left(\frac{\partial l_{\bC}}{\partial \a1} \Big|~ w_i \right)= \frac{m}{\a1}-\sum_{i=1}^r\nu_i\ln(1+\b1w_i)-\sum_{j=1}^{N-r}k_j E\left[\ln(1+\b1 Z_j)~\Big|~ \Z_j>w_r\right]=0\\
%\end{eqnarray}
%\begin{eqnarray}
\label{cond:expect:b1}
&&E\left(\frac{\partial l_{\bC}}{\partial \b1} \Big|~ w_i \right) = \frac{m}{\b1}- (\a1+1)\left\{\sum_{i=1}^r\frac{\nu_iw_i}{1+\b1w_i}+\sum_{j=1}^{N-r}k_j E\left[\frac{Z_j}{1+\b1 Z_j } ~\Big|~ \Z_j>w_r\right]\right\}=0\\
%\end{eqnarray}
%\begin{eqnarray}
\label{cond:expect:a2}
&&E\left(\frac{\partial l_{\bC}}{\partial \a2} \Big|~ w_i \right) = \frac{n}{\a2}-\sum_{i=1}^r(1-\nu_i)\ln(1+\b2w_i)-\sum_{j=1}^{N-r}(1-k_j) E\left[\ln(1+\b2 Z_j)~\Big|~ \Z_j>w_r\right]=0\\
%\end{eqnarray}
%\begin{eqnarray}
\label{cond:expect:b2}
&&E\left(\frac{\partial l_{\bC}}{\partial \b2} \Big|~ w_i \right) =\frac{n}{\b2}- (\a2+1)\left\{\sum_{i=1}^r\frac{(1-\nu_i)w_i}{1+\b2w_i}+\sum_{j=1}^{N-r}(1-k_j)E\left[\frac{Z_j}{1+\b2 Z_j } ~\Big|~ \Z_j>w_r\right]\right\}=0.
\end{eqnarray*}
Following \citet{Ng2002} and \citet{singh}, we see that the conditional distribution of $\bZ$ is truncated Lomax distribution from the left at $w_r$ and it has the following probability density function
\begin{eqnarray}\label{cond:pdf}
f_{\bK,\bZ|\bV,\bW}(z_{j}|w_r)=\left\{\frac{f_1(z_{j};\a1,\b1)}{1-F_1(w_r;\a1,\b1)}\right\}^{k_j}\left\{\frac{f_2(z_{j};\a2,\b2)}{1-F_2(w_r;\a2,\b2)}\right\}^{1-k_j}, \Z_{j}>w_r.
\end{eqnarray}
Note that this conditional pdf has two components such that if $k_j=1$ then it reduces to the first component, and if $k_j=0$ then it reduces to the second component.
Using the conditional pdf given in \eqref{cond:pdf}, the following expectations can be easily obtained as
\begin{eqnarray*}
&&\mE_{1}\left(w_r;\aa, \bb \right)= E\left[\ln(1+\bb \Z_j) \Big| \Z_j >w_r \right] = \ln(1+\bb w_r)+\frac{1}{\aa}\\
&&\mE_{2}\left(w_r;\aa, \bb \right) = E\left[\frac{\Z_j}{1+\bb \Z_j} \Big| \Z_j >w_r \right] = \frac{1+(\aa+1 )w_r}{\bb(\aa+1)(1+\bb w_r)}
\end{eqnarray*}
see \citet{Helu} and \citet{reza} for more details.

Upon updating the missing data with the expectations above in the E-step, the log-likelihood function is maximized in the M-step at the $(k+1)$th stage by estimating $\a1^{k+1}$ and $\a2^{k+1}$ using
\begin{eqnarray}
%\label{EM:a1:update}
&&\ahat1^{k+1} = \left\{\frac{1}{m} \sum_{i=1}^{r}\nu_i\ln(1+\bhat1^kw_i)+\sum_{j=1}^{N-r}k_j\mE_1(w_r;\ahat1^k,\bhat1^k)\right\}^{-1},\no\\
%\end{eqnarray}
%%%%%%
%\begin{eqnarray}
\label{EM:a2:update}
&&\ahat2^{k+1} = \left\{\frac{1}{n} \sum_{i=1}^{r}(1-\nu_i)\ln(1+\bhat2^k w_i)+\sum_{j=1}^{N-r}(1-k_j)\mE_1(w_r;\ahat2^k,\bhat2^k)\right\}^{-1}\no.
\end{eqnarray}
Once $\ahat1^{k+1}$ and $\ahat2^{k+1}$ are computed, one can readily obtain $\bhat1^{k+1}$ and $\bhat2^{k+1}$ respectively as follows
\begin{eqnarray}
%\label{EM:b1:update}
&&\bhat1^{k+1}=\left\{\frac{1}{m} (\ahat1^{k+1}+1)\left[ \sum_{i=1}^{r}\frac{\nu_i w_i}{1+\bhat1^{k} w_i}+\sum_{j=1}^{N-r}k_j\mE_2(w_r;\ahat1^{k+1},\bhat1^k)\right]\right\}^{-1},\no\\
%\end{eqnarray}
%\begin{eqnarray}
\label{EM:b2:update}
&&
\bhat2^{k+1}=\left\{\frac{1}{n} (\ahat2^{k+1}+1)\left[ \sum_{i=1}^{r}\frac{(1-\nu_i) w_i}{1+\b2^{k} w_i}+\sum_{j=1}^{N-r}(1-k_j)\mE_2(w_r;\ahat2^{k+1},\bhat2^k)\right]\right\}^{-1}\no.
\end{eqnarray}
The EM estimates of the parameters $\left( \a1, \b1, \a2, \b2 \right)$ can be computed by this iterative procedure until convergence is reached. 
%The iterations can be terminated when $\abs{\a1^{k+1} - \a1^{k}} + \abs{ \b1^{k+1} - \b1^{k}}<\epsilon$ where $\epsilon>0$ is a small positive real number. 

\subsection{Fisher Information Matrix}
In this subsection, by making use of the idea of missing information principle proposed by \citet{Louis1982}, we can  obtain the observed Fisher
information matrix. \citet{Louis1982} suggested the following relation
\begin{eqnarray}\label{info:matrix}
\bI_{\bV,\bW}\ll\bpsi\rr = \bI_{\bC}\ll\bpsi\rr - \bI_{\bK,\bZ \mid \bV,\bW}\ll\bpsi\rr
\end{eqnarray}
where $\bpsi=\ll\a1,\b1,\a2,\b2,\rr'$, $\bI_{\bV,\bW}\ll\bpsi\rr$, $\bI_{\bC}\ll\bpsi\rr$ and
$\bI_{\bK,\bZ \mid \bV,\bW}\ll\bpsi\rr$ are the observed, complete and missing information matrices respectively. Now, the information matrix of a complete data set following the Lomax distribution can be obtained as
%%%%%%
\begin{eqnarray}\label{comp:inf:mat}
\bI_{\bC}\ll\bpsi\rr = - E\ll\frac{\partial^2 \ln\mL}{\partial \bpsi^2}\rr =\left[
  \begin{array}{cccc}
   \frac{m}{\a1^2} & \frac{m}{\b1(\a1+1)} & 0& 0\\ \\
   \frac{m}{\b1(\a1+1)} & \frac{m\a1}{\b1^2(\a1+2)} & 0 & 0\\ \\
   0 & 0 & \frac{n}{\a2^2} & \frac{n}{\b2(\a2+1)}\\ \\
   0 & 0 &\frac{n}{\b2(\a2+1)} & \frac{n\a2}{\b2^2(\a2+2)}
  \end{array}
\right]
\end{eqnarray}
where $\ln \mL \ll \bpsi \rr = m\ln\a1+m\ln\b1 +n\ln\a2+n\ln\b2-
(\a1+1)\sum_{i=1}^{m}\ln(1+\b1x_i)-
(\a2+1)\sum_{j=1}^n \ln(1+\b2y_j)$ is the corresponding log-likelihood equation.

Moreover, the missing
information matrix $\bI_{\bK,\bZ \mid \bV,\bW}\ll\bpsi\rr$ is given by
%%%%%%
\begin{equation}\label{missing:inf:mat}
\bI^{(j)}_{\bK,\bZ \mid \bV,\bW}\ll\bpsi\rr = -E\ll \frac{\pp^2\ln f_{\bK,\bZ|\bV,\bW}(z_{j}|w_r, z_j>w_r)}{\pp \bpsi^2}\rr
\end{equation}
where the minus expected values of the second partial derivatives of $\ln f_{\bK,\bZ|\bV,\bW}(z_{j}|w_r, z_j>w_r)$ are computed as follows
\begin{eqnarray*}
&& -E\ll\frac{\pp^2\ln f_{\bK,\bZ|\bV,\bW}}{\pp\a1^2}\rr=\frac{1}{\a1^2},~~-E\ll\frac{\pp^2\ln f_{\bK,\bZ|\bV,\bW}}{\pp\a1\b1}\rr=\frac{1}{\b1(\a1+1)(1+\b1w_r)},\\
&& -E\ll\frac{\pp^2\ln f_{\bK,\bZ|\bV,\bW}}{\pp\b1^2}\rr=\frac{\a1}{\b1^2(\a1+2)(1+\b1w_r)^2},\\
&& -E\ll\frac{\pp^2\ln f_{\bK,\bZ|\bV,\bW}}{\pp\a2^2}\rr=\frac{1}{\a2^2},~~-E\ll\frac{\pp^2\ln f_{\bK,\bZ|\bV,\bW}}{\pp\a2\b2}\rr=\frac{1}{\b2(\a2+1)(1+\b2w_r)},\\
&& -E\ll\frac{\pp^2\ln f_{\bK,\bZ|\bV,\bW}}{\pp\b2^2}\rr=\frac{\a2}{\b2^2(\a2+2)(1+\b2w_r)^2}.
\end{eqnarray*}
Notice that all the remaining entries of the missing information matrix are equal to zero. Hence, the asymptotic variance-covariance matrix of the parameter vector $\widehat{\bpsi}$ can be readily computed by $\ll\bI_{\bV,\bW}\ll\widehat{\bpsi}\rr\rr^{-1}$ such that $\widehat{\bpsi}$ is obtained using the EM estimates of the parameters.
Therefore, an approximate $(1 -\aa)100\%$ confidence interval of $\bpsi_i$ can be constructed by $$\ll \widehat{\bpsi}_i-z_{\aa/2}\sqrt{var\ll \widehat{\bpsi}_i\rr},~\widehat{\bpsi}_i+z_{\aa/2}\sqrt{var\ll \widehat{\bpsi}_i\rr}  \rr$$
where $var\ll \widehat{\bpsi}_i\rr$ is the $i^{th}$ diagonal element of $\ll\bI_{\bV,\bW}\ll\widehat{\bpsi}\rr\rr^{-1}$ for $i=1,2,3,4$.
%%%%%%%%%%%%%%%%%%%%%%%%%%
\subsection{Bootstrap Confidence Intervals}
In this subsection, we also propose to use the bootstrapping method to construct confidence intervals for EM estimations. We use the Boot-p and Boot-t algorithms proposed by \citet{Efron} and \citet{Hall} respectively. The algorithms are given as follows:
\begin{itemize}
    \item[i)] \textbf{Boot-p algorithm:} In this method, one can construct confidence intervals using the $100(\alpha/2)$th and $100(1-\alpha/2)$th quantiles of the empirical bootstrapped sample of $\phat^*$. Namely,
    \begin{itemize}
    \item[1)] Compute the EM estimation $\phat^*$ of $\bpsi^*$ based on the current jointly censored sample $\ll \bv, \bw \rr$.
    \item[2)] Compute the bootstrapped estimate $\phat^*$ by re-sampling from the original data with replacement, say $\ll \bv^*, \bw^* \rr$. 
    \item[3)] Repeat this process $D$ times to obtain the sorted estimations in ascending order as $$\phat^*_{(1)},\phat^*_{(2)},\ldots,\phat^*_{(D)}$$
    \item[4)] Finally, a $100(1- \alpha)\%$ Boot-p confidence interval can be written as $\ll \phat^*_{\ll D\aa/2\rr}, \phat^*_{\ll D(1-\aa/2)\rr} \rr$.
    \end{itemize}
    
    \item[i)]\textbf{Boot-t algorithm:} After generating the bootstrap sample as given above, do the following steps:
    \begin{itemize}
    \item[1)] Compute the following t-statistics $T\ll\phat^* \rr= \ll \phat^*-\phat \rr/se(\phat^*)$ such that $se(\phat^*)$ is the bootstrapped standard error.
    \item[2)] Repeating this step $D$ times and sorting the bootstrap sample, obtain
    $$T\ll\phat^*\rr_{(1)},T\ll\phat^*\rr_{(2)},\ldots,T\ll\phat^*\rr_{(D)}$$
    
    \item[3)] Finally, a $100(1- \alpha)\%$ Boot-t confidence interval can be written as $$\ll \phat+T\ll\phat^*\rr_{\ll D\aa/2\rr}, \phat+T\ll\phat^*\rr_{\ll D(1-\aa/2)\rr} \rr$$
    \end{itemize}
\end{itemize}
%%%%%%%%%%%%%%%%%%%%%%%%%%%
\section{Bayesian Inferences}\label{Bayes}

In this section, we consider the Bayesian estimation for the parameters of the joint Lomax distribution under the assumption that all the parameters have the independent gamma priors such that $\a1 \sim G(a_1,b_1)$,
$\b1 \sim G(c_1, d_1)$, $\a2 \sim G(a_2,b_2)$ and
$\b2 \sim G(c_2, d_2)$. More precisely,  the prior functions are given as
\begin{IEEEeqnarray*}{rCl}
\pi\ll \a1 \rr &\propto& \a1^{a_1-1~}e^{-b_1 \a1}, ~\pi\ll \a2 \rr \propto \a2^{a_2-1~}e^{-b_2 \a2},\\
\pi\ll \b1 \rr &\propto& \a1^{c_1-1~}e^{-d_1 \b1}, ~\pi\ll \b2 \rr \propto \b2^{c_2-1~}e^{-d_2 \b2}.
\end{IEEEeqnarray*}

Therefore, using the likelihood function given in Equation \eqref{likelihood:data}, the posterior joint density function can be obtained as follows
\begin{IEEEeqnarray}{rCl}
\label{indep:posterior}
\pi\ll \bpsi \mid data \rr
&\propto& \a1^{\mr+a_1-1}\b1^{\mr+c_1-1}\a2^{\nr+a_2-1}\b2^{\nr+c_2-1}
\no \\ 
&&\times{\rm exp}\left\{ -\a1 \ll b_1+(m-\mr)\ln(1+\b1 w_r)+\sum_{i=1}^r\nu_i\ln(1+\b1 w_i) \rr -d_1\b1\right\}
\no \\ 
&& \times
{\rm exp}\left\{ -\a2 \ll b_2+(n-\nr)\ln(1+\b2 w_r)+\sum_{i=1}^r(1-\nu_i)\ln(1+\b2 w_i) \rr  -d_2\b2\right\}\no \\ 
&& \times {\rm exp}\left\{-\sum_{i=1}^r\nu_i\ln(1+\b1 w_i) -\sum_{i=1}^r(1-\nu_i)\ln(1+\b2 w_i)\right\}.
\end{IEEEeqnarray}

In this paper, three different loss functions are considered. One of them is the most commonly used squared error loss function (SEL) which is defined as follows:
\begin{eqnarray*}
L_S\ll\widehat{t}(\psi), t(\psi)\rr = \ll\widehat{t}(\psi)-t(\psi)\rr^2
\end{eqnarray*}
where $\widehat{t}(\psi)$ is an estimator of $t(\psi)$. It is known that SEL, being a symmetric loss function, gives equal weights to both underestimation and overestimation. This is a drawback when the over estimation and underestimation have not same importance. To overcome this problem, linear-exponential (LINEX) loss function introduced by
\citet{linex} as follows
\begin{eqnarray*}\label{linex}
L_L\ll\widehat{t}(\psi), t(\psi)\rr =e^{\nu\ll\widehat{t}(\psi)-t(\psi)\rr}-\nu\ll\widehat{t}(\psi)-t(\psi)\rr-1, \;\nu\neq0.
\end{eqnarray*}
The LINEX loss function is an asymmetric, convex function whose shape is determined by the value of $\nu$. Determining the degree of asymmetry, the negative values of $\nu$ result in overestimation and positive values of $\nu$ result in underestimation.
Therefore, the Bayes estimate of $t(\psi)$ under the LINEX loss function is given by
\begin{eqnarray*}\label{llbays}
\widehat{t}_{L}(\psi)=-\frac{1}{\nu}\ln\left[E_{t}\ll e^{-\nu t(\psi)}\mid\bm{x} \rr \right]= -\frac{1}{\nu}\ln\left[\int_{0}^{\infty}\int_{0}^{\infty}e^{-\nu t(\psi)}\pi(\aa, \beta \mid \bm{x})d\aa \, d\beta\right].
\end{eqnarray*}
Finally, the general entropy asymmetric  loss (GEL) function is also considered and it is given by
\begin{eqnarray*}\label{gen}
L_{GEL}\ll\widehat{t}(\psi), t(\psi)\rr
=\ll\frac{\widehat{t}(\psi)}{t(\psi)}\rr^{\kappa}-\kappa\ln\ll\frac{\widehat{t}(\psi)}{t(\psi)}\rr -1,\: \kappa\neq 0.
\end{eqnarray*}
where $\kappa$ is a parameter determining the shape of the function and representing the degree of symmetry. $\kappa>0$ corresponds to the overestimation and $\kappa<0$ corresponds to underestimation. The Bayes estimator under GEL function is given by
\begin{eqnarray*}
\widehat{t}_{GEL}(\psi)=\left[E_{t}\ll  t(\psi)^{-\kappa}\mid\bm{x} \rr \right]^{-1/\kappa}= \left[\int_{0}^{\infty}\int_{0}^{\infty}t(\psi)^{-\kappa}\pi(\aa, \beta \mid \bm{x})d\aa \, d\beta\right]^{-1/\kappa}.
\end{eqnarray*}

%%%%%%%%%%%%%%%%%%%%%%%%%%%%%%%%
\subsection{Importance Sampling}

Notice that the posterior density function given in Equation \eqref{indep:posterior} can also be written in the following form 
\begin{IEEEeqnarray}{rCl}\label{post}
\pi\ll \bpsi \mid data \rr
&\propto& G_{\b1}\ll \mr+c_1, d_1\rr \times G_{\a1|\b1}\ll \mr+a_1, K_1\rr \times G_{\b2}\ll \nr+c_2, d_2\rr \times G_{\a2|\b2}\ll \nr+a_2, K_2\rr \no\\
&&\times \frac{{\rm exp}\left\{-\sum_{i=1}^r \nu_i\ln(1+\b1w_i) -\sum_{i=1}^r (1-\nu_i)\ln(1+\b2w_i)\right\}}{K_1^{m_r+a_1}K_2^{n_r+a_2}} 
\end{IEEEeqnarray}
where 
$$K_1=b_1+(m-\mr)\ln(1+\b1 w_r)+\sum_{i=1}^r \nu_i\ln(1+\b1w_i),$$
$$K_2=b_2+(n-\nr)\ln(1+\b2 w_r)+\sum_{i=1}^r (1-\nu_i)\ln(1+\b2w_i),$$
$G_{\b1}$ and $G_{\b2}$ denote the distributions of $\b1$ and $\b2$ respectively and $G_{\a1|\b1}$ and $G_{\a2|\b2}$ represent the distributions of $\a1$ and $\a2$ given $\b1$ and $\b2$ respectively.
Now, we can consider the following steps to produce samples from the posterior density given in \eqref{post}
\begin{itemize}
    \item[(1)] 
    Generate $\b1$ and $\b2$ using $G_{\b1}\ll \mr+c_1, d_1\rr$ and  $G_{\b2}\ll \nr+c_2, d_2\rr$, respectively
    \item[(2)] 
    Given $\b1$ and $\b2$ from previous step, generate $\a1$ and $\a2$ using $G_{\a1|\b1}\ll \mr+a_1, K_1\rr$ and $G_{\a2|\b2}\ll \nr+a_2, K_2\rr$, respectively
    \item[(3)]
    Repeat the steps (1) and (2) to compute $\bpsi_i=(\a{1i},\b{1i},\a{2i},\b{2i})$ for $i=1,2,...,T.$
\end{itemize}
After generating $T$ samples, the Bayes estimates under SEL, LINEX and GEL loss functions can be computed as follows
%%%%%%%%%%%%%%%%%%%%%%%%%%
\begin{IEEEeqnarray*}{rCl}
\widehat{t}_{\rm SEL}(\bpsi)&=& \frac{\sum_{j=1}^T t(\bpsi_j)\mH(\bpsi_j)}{\sum_{j=1}^M\mH(\bpsi_j)}, \\
\widehat{t}_{\rm LINEX}(\bpsi) &=& -\frac{1}{\nu}\ln\ll \frac{\sum_{j=1}^T {\rm exp}\ll -\nu t(\bpsi_j)\rr\mH(\bpsi_j)}{\sum_{j=1}^T \mH(\bpsi_j)}\rr, \\
\widehat{t}_{\rm GEL}(\bpsi)&=&\ll \frac{\sum_{j=1}^T  t(\bpsi_j)^{-\kappa} \mH(\bpsi_j)}{\sum_{j=1}^T \mH(\bpsi_j)}  \rr^{-1/\kappa}
\end{IEEEeqnarray*}
where
\begin{eqnarray*}
\mH(\bpsi)={\rm exp}\ll -\sum_{i=1}^r \nu_i\ln(1+\b{1j}w_i)-\sum_{i=1}^r (1-\nu_i)\ln(1+\b{2j}w_i) \rr \Big/ \ll K_1^{m_r+a_1}K_2^{n_r+a_2}\rr.
\end{eqnarray*}
In order to construct a Bayesian credible interval, using the idea of 
\citet{chen-shao}, we consider the posterior density $\pi(\eta | \bx)$ of a parameter $\eta$. Assume that $$\theta^{(p)}={\rm inf}\left\{\theta:\Pi(\theta|\bx) \geq p; 0<p<1 \right\}$$ represents the pth quantile of the distribution is where $\Pi(\eta|\bx)$ denotes the posterior cumulative distribution function of $\eta$. Now, given the value of $\eta^*$, one can define a simulation consistent estimator of $\Pi(\eta^* | \bx)$ as
\begin{eqnarray*}
\Pi(\eta^*|\bx)= \frac{1}{M}\sum_{i=1}^M I(\eta \leq \eta^*)
\end{eqnarray*}
where $I(\eta \leq \eta^*)$ is an indicator function. Then, $\Pi(\eta^*|\bx)$ is estimated by
\begin{eqnarray*}
\widehat{\Pi}(\eta^*|\bx) = 
\begin{cases}
      0 & \text{if} ~\eta^*<\eta_{(1)}\\
      \sum_{j=1}^i \gamma_j & \text{if} ~\eta_{(i)}<\eta^*<\eta_{(i+1)}\\
      1 & \text{if} ~\eta_{(M)}
    \end{cases}  
\end{eqnarray*}
where $\gamma_j=1/M$ and $\theta_{(j)}$ is the jth ordered value of $\theta_{j}$. Thus,
$\theta^{(p)}$ can be approximated by the following
\begin{eqnarray*}
\theta^{(p)}=\begin{cases}
      \theta_{(1)} & \text{if}~ p=0\\
      \theta_{(j)} & \text{if}~\sum_{j=1}^{i-1} \gamma_j<p<\sum_{j=1}^i \gamma_j
\end{cases}
\end{eqnarray*}
Now, the $100(1-p)\%$ confidence intervals can be defined as $\ll\widehat{\eta}^{j/s}, \widehat{\eta}^{(j+[(1-p)s])/s} \rr$, $j=1, 2,..., s-[(1-p)s]$ in which  $[.]$ denotes the greatest integer function. Then, the interval having the shortest length can be taken as the credible interval of $\eta$.
%%%%%%%%%%%%%%%%%%%%%%%%%%%%%%%
%%%%%%%%%%%%%%%%%%%%%%%%%%%%%%%
\section{Monte Carlo Simulation Experiments}\label{sim}

In this section, we conduct a Monte Carlo simulation to evaluate the performance of EM and Bayes estimation methods. We consider the following different values for the two populations as $m=20, 40, 80$ and $n=20, 40, 80$. The size of censored sample is taken as $r=10, 20, 30, 40 , 80.$ The real values of the parameters are chosen to be $(\a1=2, \b1=3, \a2=3, \b2=5)$. For each setting, we compute the MLEs using EM algorithm. Bayes estimates under SEL, LINEX with $\nu=-0.5$ and $\nu=0.5$, GEL with $\kappa=-0.5$ and $\kappa=0.5$ are also computed by generating a size of $10^4$ importance sampling procedure together with the following informative prior values $(a_1=4, b_1=2, a_2=6, b_2=2)$, these values are chosen so that the prior means are equal to the real parameter values. However, the same argument does not work for the hyper-parameters $c_1, d_1, c_2, d_2$. Because the scale parameters of the distributions of $\bb_1$ and $\bb_2$ depend on $d_1$ and $d_2$, respectively. Fixing the values of $d_1$ and $d_2$ and computing $c_1$ and $c_2$ and then using these values for all situations, we observe that increasing the value of $r$ affects the performance of Bayes estimates dramatically. Therefore, we propose to choose $c_1, d_1, c_2, d_2$ for each scenario accordingly as given in Table \ref{hyper}.
\begin{table}[ht]
\centering
\caption{The hyper-parameter values in the simulation}
\label{hyper}
\begin{tabular}{cccccccccccc}
 & \multicolumn{3}{c}{$m=n=20$}&&\multicolumn{3}{c}{$m=n=40$}&&\multicolumn{3}{c}{$m=n=80$} \\
\cmidrule{2-4}\cmidrule{6-8}\cmidrule{10-12}
    $r$ & 10 & 20 & 30 && 10 & 20 & 40  && 20 & 40  & 80 \\ \hline
  $c_1$ & 3  & 12 & 48 && 3  & 12 & 60  && 15 & 60  & 240 \\ 
  $d_1$ & 1  & 4  & 16 && 1  & 4  & 20  && 5  & 20  & 80 \\ 
  $c_2$ & 15 & 20 & 60 && 15 & 60 & 100 && 75 & 225 & 850 \\ 
  $d_2$ & 3  & 4  & 12 && 3  & 12 & 20  && 15 & 45  & 170 \\ 
   \hline
\end{tabular}
\end{table}
Moreover, the 95\% approximate confidence intervals and bootstrapped confidence intervals using Boot-t and Boot-p methods for MLE and Bayes credible intervals are obtained. 
Totally, $10^4$ repetitions are carried out and average values (Avg), mean squared errors (MSE), confidence/credible interval lengths (IL) and coverage probabilities (CP) are obtained for the purpose of comparison. MSEs of the estimators are computed as follows
\begin{eqnarray*}
{\rm MSE}\ll\widehat{\theta}\rr=\frac{1}{10^4}\sum_{i=1}^{10^4}\ll\widehat{\theta}_i-\theta \rr^2
\end{eqnarray*}
where $\widehat{\theta}_i$ is EM and Bayes estimators under SEL loss function in the ith replication. On the other hand, the MSEs of Bayes estimators under LINEX and GEL loss functions are computed respectively by
\begin{IEEEeqnarray*}{rCl}
{\rm MSE_{LINEX}}\ll\widehat{\theta}\rr&=&\frac{1}{10^4}\sum_{i=1}^{10^4}\ll e^{\nu\ll\widehat{\theta}_i-\theta\rr}-\nu\ll\widehat{\theta}_i-\theta\rr-1\rr, \\
%\end{eqnarray*}
%%%%%%%%%%%%%%%%%%%%%%
%\begin{eqnarray*}
{\rm MSE_{GEL}}\ll \widehat{\theta}\rr&=&\frac{1}{10^4}\sum_{i=1}^{10^4}\ll\ll\frac{\widehat{\theta}_i}{\theta}\rr^{\kappa}-\kappa\ln\ll\frac{\widehat{\theta}_i}{\theta}\rr -1\rr.
\end{IEEEeqnarray*}
All of the computations are performed using the R Statistical Program \citep{R2018}. 
We tabulate the results of the simulation in Tables \ref{Res:EM}-\ref{Res:CP-IL}.
In Table \ref{Res:EM}, we summarize the average values (Avg) and corresponding MSEs of EM estimates based on different values of $m, n$ and $r$. We observe from this table that all of the estimates of the parameters have satisfactory performances in terms of both Avg and MSE. It is worthy to note that even with small values of $r$ the MSEs of the estimators are  quite small. Generally. Increasing the value of $r$ makes a decrease in the values of MSEs. Tables \ref{Res:SEL}-\ref{Res:GEL} show the Bayes estimates of the parameters based on SEL, LINEX and GEL functions. Based on Table \ref{Res:SEL}, it is observed that the MSEs of all of the Bayes estimates are smaller than the MSEs of EM estimates. Also, we have seen that the MSEs of both LINEX and GEL estimates are smaller than the MSEs of SEL. Thus, we can conclude that the Bayes estimates are preferable to EM estimates in terms of having smaller MSEs. Table \ref{Res:CP-IL} presents the 95\% coverage probabilities (CP) (with nominal of 95\%) and corresponding average interval lengths (IL). 

Generally, Bayes CPs are higher than CPs of EM method. Increasing the values of $r$ affects Bayes CPs and ILs positively and Bayes CPs become larger than CPs of EM. Although, Boot-t and Boot-p methods provide reasonably high CPs, they are always less than that of Bayes and EM. If we consider the ILs, then Boot-t and Boot-p methods have quite small ILS than the other methods. Finally, Bayes ILs are always less than approximate ILs of EM.

%It can be seen that the CPs of Bayes confidence interval are closer to the nominal CPs than the confidence intervals based on EM estimates. This is true for all combinations of $m, n$ and $r$. We also observe that the average ILs of Bayes method are less than the average ILs of EM. Generally, we can conclude that for all combinations of $m, n$ and $r$ the Bayes method is better than the EM method.

%%%%%%%%%%%%%%%%%%%%%%%%%%%%%%
%%%%%%%%%%%%%%%%%%%%%%%%%%%%%%
\section{Real Data Example}\label{RealData}
In this section, we analyze the bladder cancer data which was given in \citet{LeeWang} and also analyzed by \citet{rady} using the Lomax distribution. This data consists of remission times (in months) of a sample of 128 bladder cancer patients. To illustrate the findings of the paper, we divided the data into two samples by randomly sampling $40$ observations and considering these observations as the $X$ sample, and the remaining $88$ observations are taken as the $Y$ sample, see Table \ref{Data}.

\begin{table}[!ht]
\centering
\caption{Bladder cancer data divided into two samples}
\label{Data}
\begin{tabular}{rrrrrrrrr}
Data: X \\ 
  \hline
1 & 6.940 & 17.140 & 0.510 & 2.640 & 4.340 & 20.280 & 2.691 & 2.260 \\ 
  2 & 17.120 & 0.810 & 2.540 & 46.120 & 5.320 & 5.090 & 9.220 & 3.640 \\ 
  3 & 10.060 & 0.400 & 32.150 & 7.390 & 13.290 & 8.260 & 6.540 & 3.250 \\ 
  4 & 7.870 & 2.460 & 3.880 & 8.650 & 43.010 & 2.830 & 2.690 & 15.960 \\ 
  5 & 7.320 & 7.590 & 3.310 & 10.750 & 3.700 & 5.060 & 19.360 & 34.260 \\ 
  \hline
Data: Y \\ 
  \hline
1 & 0.080 & 6.970 & 5.170 & 4.180 & 4.260 & 5.620 & 5.850 & 12.030 \\ 
  2 & 2.090 & 9.020 & 7.280 & 5.340 & 5.410 & 11.640 & 11.980 & 2.020 \\ 
  3 & 3.480 & 3.570 & 9.740 & 10.660 & 7.630 & 17.360 & 19.130 & 3.360 \\ 
  4 & 4.870 & 7.090 & 14.760 & 36.660 & 1.260 & 1.400 & 1.760 & 6.760 \\ 
  5 & 8.660 & 13.800 & 26.310 & 1.050 & 4.330 & 3.020 & 4.500 & 12.070 \\ 
  6 & 13.110 & 25.740 & 2.620 & 4.230 & 5.490 & 5.710 & 6.250 & 21.730 \\ 
  7 & 23.630 & 0.500 & 3.820 & 5.410 & 7.660 & 7.930 & 8.370 & 2.070 \\ 
  8 & 0.200 & 7.260 & 14.770 & 7.620 & 11.250 & 11.790 & 12.020 & 3.360 \\ 
  9 & 2.230 & 9.470 & 10.340 & 16.620 & 79.050 & 18.100 & 2.020 & 6.930 \\ 
  10 & 3.520 & 14.240 & 14.830 & 1.190 & 1.350 & 1.460 & 4.510 & 12.630 \\ 
  11 & 4.980 & 25.820 & 0.900 & 2.750 & 2.870 & 4.400 & 8.530 & 22.690 \\ 
   \hline
\end{tabular}
\end{table}
%%%%%%%%%%%%%%%%%%%%%%%%%%%%%
Then, we fit Lomax distribution to each sample and report the results in Table \ref{KS}. We provided the Kolmogorov-Smirnov test statistic values (D) and the corresponding p-values, saying that the data fit the Lomax distribution with the parameters given in Table \ref{KS}.
\begin{table}[!ht]
\centering
\caption{MLEs and Kolmogorov-Smirnov test results for data}
\label{KS}
\begin{tabular}{ccccc}
Data & $\widehat{\alpha}_i$ & $\widehat{\beta}_i$ & D & p-value \\
  \hline
  X & 10.9770 & 0.0098 & 0.1396 & 0.3814 \\ 
  Y & 4.0034  & 0.0321 & 0.1133 & 0.2089 \\ 
   \hline
\end{tabular}
\end{table}
%%%%%%%%%%%%%%%%%%%%%%%%%%%%%%%%%%%%
%\begin{figure}
%\begin{subfigure}{.5\textwidth}
%  \centering
%  \includegraphics[width=.8\linewidth]{app1.eps}
%  \caption{Data X}
%  \label{fig1}
%\end{subfigure}%
%\begin{subfigure}{.5\textwidth}
%  \centering
%  \includegraphics[width=.8\linewidth]{app2.eps}
%  \caption{Data Y}
%  \label{fig2}
%\end{subfigure}
%\caption{Plots of data in the application}
%\label{fig:fig}
%\end{figure}
%%%%%%%%%%%%%%%%%%%%%%%%%%%%%%%%
We used the R package \texttt{fitdistrplus}, created by \citet{fitdistr}, uses the \texttt{optim} function to obtain the MLEs. Then MLEs are used as initial values in the EM algorithm. We also consider the following hyper-parameter values as the informative priors for the Bayesian estimators $a_1=110, b_1=10, c_1=2, d_1=200$, $a_2=40, b_2=10, c_2=1, d_2=300$ by simply equating the means of the priors to the corresponding MLEs. We report the estimated values and the corresponding confidence/credible intervals in Tables \ref{EST:data}-\ref{CICP:data}.

According to Table \ref{EST:data}, it can be concluded that the Bayes estimates based on different loss functions are very close to each other. Further, as $r$ increases, the EM estimates get close to Bayes estimates, especially when $r=40$. From Table \ref{CICP:data}, it is seen that the confidence interval of EM estimates are very wider than those based on Bayes estimates due to the high variance of EM estimates. Moreover, we can say that the lower bounds of EM confidence interval are always zero. Overall, we prefer to use Bayes confidence interval because of their small length. 
%%%%%%%%%%%%%%%%%%%%%%%%%%%%%
%%%%%%%%%%%%%%%%%%%%%%%%%%%%
\begin{table}[htbp]
\centering
\caption{Estimated values of EM and Bayes methods for different values of $r$}
\label{EST:data}
\begin{tabular}{lcrrrr}
&$r$ & $\ahat1$ & $\bhat1$ & $\ahat2$ & $\bhat2$ \\ 
  \hline
EM & 10 & 8.3684 & 0.0077 & 2.8704 & 0.0239 \\ 
   & 20 & 7.1149 & 0.0068 & 3.2875 & 0.0271 \\ 
   & 30 & 10.0378 & 0.0092 & 3.2306 & 0.0274 \\ 
   & 40 & 10.9600 & 0.0097 & 3.3398 & 0.0283 \\ 
  \hline
SEL & 10 & 10.8670 & 0.0072 & 4.0941 & 0.0105 \\ 
 & 20 & 11.0626 & 0.0062 & 4.1864 & 0.0206 \\ 
 & 30 & 10.0080 & 0.0185 & 3.2243 & 0.0401 \\ 
 & 40 & 10.5093 & 0.0314 & 2.8159 & 0.0485 \\ 
  \hline
LINEX        & 10 & 11.1497 & 0.0072 & 4.1939 & 0.0105 \\ 
$(\nu=-0.5)$ & 20 & 11.1089 & 0.0062 & 4.2132 & 0.0206 \\ 
             & 30 & 10.0745 & 0.0185 & 3.3130 & 0.0401 \\ 
             & 40 & 10.5267 & 0.0314 & 2.8238 & 0.0485 \\ 
  \hline
LINEX       & 10 & 10.5947 & 0.0072 & 3.9979 & 0.0104 \\ 
$(\nu=0.5)$ & 20 & 11.0035 & 0.0062 & 4.1596 & 0.0206 \\ 
            & 30 & 9.9377 & 0.0185 & 3.1457 & 0.0400 \\ 
            & 40 & 10.4876 & 0.0314 & 2.8075 & 0.0485 \\ 
  \hline
GEL             & 10 & 10.8416 & 0.0069 & 4.0700 & 0.0101 \\ 
$(\kappa=-0.5)$ & 20 & 11.0577 & 0.0061 & 4.1800 & 0.0205 \\ 
                & 30 & 10.0010 & 0.0181 & 3.1993 & 0.0397 \\ 
                & 40 & 10.5074 & 0.0313 & 2.8129 & 0.0483 \\ 
  \hline
GEL            & 10 & 10.7904 & 0.0064 & 4.0216 & 0.0093 \\ 
$(\kappa=0.5)$ & 20 & 11.0476 & 0.0061 & 4.1668 & 0.0204 \\ 
               & 30 & 9.9870 & 0.0175 & 3.1512 & 0.0389 \\ 
               & 40 & 10.5035 & 0.0311 & 2.8066 & 0.0480 \\ 
   \hline
\end{tabular}
\end{table}
\begin{table}[!ht]
\centering
\caption{Confidence and credible intervals of EM and Bayes methods for different values of $r$} \label{CICP:data}
\begin{tabular}{clrrrrrrrrrrr}
  &&\multicolumn{2}{c}{$\a1$}&&\multicolumn{2}{c}{$\b1$}&&\multicolumn{2}{c}{$\a2$}&&\multicolumn{2}{c}{$\b2$}\\
\cmidrule{3-4}\cmidrule{6-7}\cmidrule{9-10}\cmidrule{12-13}
 r&& \multicolumn{1}{c}{L} & \multicolumn{1}{c}{U} && \multicolumn{1}{c}{L} & \multicolumn{1}{c}{U} && \multicolumn{1}{c}{L} & \multicolumn{1}{c}{U} && \multicolumn{1}{c}{L} & \multicolumn{1}{c}{U}\\  
  \hline
10&  ACI    & 0.0000 & 42.8955 && 0.0000 & 0.0398 && 0.0000 & 10.7252 && 0.0000 & 0.0903 \\ 
  & Boots.p & 2.8310 & 15.0191 && 0.0027 & 0.0130 && 2.2316 & 3.7967 && 0.0193 & 0.0304 \\ 
  & Boots.t & 1.4827 & 15.2541 && 0.0020 & 0.0134 && 1.9351 & 3.8057 && 0.0172 & 0.0306 \\ 
  & BAYES   & 8.1390 & 12.2926 && 0.0082 & 0.0511 && 2.6008 & 4.9677 && 0.0115 & 0.0482 \\ 
   \hline
20& ACI    & 0.0000 & 34.5814 && 0.0000 & 0.0335 && 0.0000 & 26.8106 && 0.0000 & 0.2268 \\ 
  &Boots.p & 3.8603 & 11.0070 && 0.0039 & 0.0098 && 2.4228 & 4.0463 && 0.0216 & 0.0321 \\ 
  &Boots.t & 3.4365 & 10.7933 && 0.0037 & 0.0099 && 2.3563 & 4.2187 && 0.0216 & 0.0327 \\ 
  &BAYES   & 7.1263 & 11.4647 && 0.0110 & 0.0582 && 1.9914 & 3.9400 && 0.0330 & 0.0867 \\ 
   \hline
30& ACI    & 0.0000 & 97.7040 && 0.0000 & 0.0903 && 0.0000 & 11.6072 && 0.0000 & 0.1011 \\ 
  &Boots.p & 7.2152 & 13.0827 && 0.0071 & 0.0109 && 2.6431 & 3.9937 && 0.0240 & 0.0316 \\ 
  &Boots.t & 7.0117 & 13.0638 && 0.0072 & 0.0112 && 2.5186 & 3.9426 && 0.0233 & 0.0315 \\ 
  &BAYES   & 5.7038 & 9.6871  && 0.0309 & 0.0982 && 1.6830 & 3.3537 && 0.0434 & 0.1029 \\ 
   \hline
 40 & ACI  & 0.0000 & 80.8883 && 0.0000 & 0.0729 && 0.0000 & 15.4225 && 0.0000 & 0.1355 \\ 
  &Boots.p & 7.9621 & 13.8563 && 0.0078 & 0.0111 && 2.7864 & 4.0055  && 0.0253 & 0.0317 \\ 
  &Boots.t & 7.8754 & 14.0447 && 0.0080 & 0.0115 && 2.7059 & 3.9737  && 0.0249 & 0.0316 \\ 
  &BAYES   & 4.8903 & 8.4258  && 0.0457 & 0.1235 && 1.4474 & 2.8138  && 0.0595 & 0.1269 \\ 
   \hline
\end{tabular}
\end{table}
%%%%%%%%%%%%%%%%%%%%%%%%%%%%%%%%%%%%

\section{Conclusive Remarks}\label{conclusion}
In this paper, we discussed the estimation problem of joint type-II censored data from two Lomax populations. Although we obtained MLEs via the Newton-Raphson (NR) method in a theoretical framework, we observed that NR method is not stable and does not converge most of the time in our simulation studies. Therefore, we made use of EM algorithm to estimate the parameters and construct the asymptotic confidence intervals as well as bootstrap-p and bootstrap-t confidence intervals. EM algorithm always converges in the simulation. We concluded that the asymptotic variance of EM turns out to be large. However, Boot-p and Boot-t confidence intervals are very much narrower. Moreover, we also consider the Bayesian estimation using independent gamma priors based on squared error loss, LINEX loss and generalized entropy loss functions. Since there are four parameters of two Lomax populations, we had eight hyper-parameters of the prior distributions. In order to obtain better performance, it is needed to select the hyper-parameters suitably due to the dependence of scale parameters of posterior distributions of $\bb_1$ and $\bb_2$ only on the hyper-parameter $d_1$ and $d_2$ respectively. Upon choosing suitable values of these parameters, Bayesian methods produced better performance than EM algorithm. Due to the complexity of posterior distribution, importance sampling method was employed to generate data from posterior. We also computed the average lengths and credible intervals of Bayesian method. We conducted a Monte Carlo simulation to compare the listed methods. According to the results, Bayes estimators had better performance in terms of MSE. 

As future studies, it is possible to consider the estimation on two Lomax population under the jointly progressive censoring or further extending the study to more than two populations under different censoring schemes.
%%%%%%%%%%%%%%%%%%%%%%%%%%%%%%%%%%%%%%%%%
\vskip 1cm
\noindent\textbf{Acknowledgements.} This paper was written while Dr. Yasin Asar visited McMaster University and he was supported by The Scientific and Technological Research Council of Turkey (TUBITAK), BIDEB-2219 Postdoctoral Research Program, Project No: 1059B191700537. We are grateful to Prof. N. Balakrishnan for his guidance and comments.
%%%%%%%%%%%%%%%%%%%%%%%%%%%%%%%%%%%%%%%%%%%%%%

%%%%%%%%%%%%%%%%%%%%%%%%%%%%%%%%%
\section{Appendix}
\begin{table}[!ht]
\centering
\caption{Average Values (Avg) and Mean Squared Errors (MSE) of EM estimations.}
\centering \label{Res:EM}
\begin{tabular}{ccccccc}
$(m,n)$& $r$ & & $\ahat1$ & $\bhat1$ & $\ahat2$ & $\bhat2$ \\
  \hline
$(20,20)$ & 10 & Avg & 2.128 & 3.136 & 3.174 & 5.159 \\  
             &&  MSE & 0.621 & 1.054 & 1.015 & 1.894 \\ 
           &20&  Avg & 2.123 & 3.100 & 3.215 & 5.084 \\ 
             &&  MSE & 0.494 & 0.691 & 1.069 & 1.610 \\ 
           &30&  Avg & 2.217 & 3.001 & 3.370 & 4.972 \\ 
             &&  MSE & 0.734 & 0.567 & 1.511 & 1.856 \\
  \hline
$(40,40)$&20&  Avg & 2.090 & 3.111 & 3.053 & 5.083 \\ 
           &&  MSE & 0.480 & 0.959 & 0.730 & 1.852 \\ 
         &30&  Avg & 2.067 & 3.095 & 3.098 & 5.122 \\ 
           &&  MSE & 0.376 & 0.754 & 0.711 & 1.646 \\ 
         &40&  Avg & 2.085 & 3.070 & 3.166 & 5.082 \\ 
           &&  MSE & 0.335 & 0.586 & 0.751 & 1.553 \\
  \hline
$(80,80)$&20&  Avg & 2.023 & 3.036 & 3.009 & 5.048 \\ 
           &&  MSE & 0.318 & 0.707 & 0.588 & 1.650 \\ 
         &40&  Avg & 2.043 & 3.060 & 3.062 & 5.092 \\ 
           &&  MSE & 0.279 & 0.607 & 0.569 & 1.522 \\ 
         &80&  Avg & 2.065 & 3.071 & 3.125 & 5.089 \\ 
           &&  MSE & 0.264 & 0.541 & 0.629 & 1.513 \\ 
 \hline
\end{tabular}
\end{table}
%%%%%%%%%%%%%%%%
 
%%%%%%%%%%%%%%%%%%%%%%%%%%%%%%%%%%
\begin{table}[!ht]
\centering
\caption{Average Values (Avg) and Mean Squared Errors (MSE) of Bayes estimations under SEL function.}
\centering
\label{Res:SEL}
\begin{tabular}{ccccccc}
$(m,n)$& $r$ & & $\ahat1$ & $\bhat1$ & $\ahat2$ & $\bhat2$ \\
  \hline
$(20,20)$ & 10 & Avg & 2.069 & 3.163 & 3.102 & 5.080 \\ 
             &&  MSE & 0.124 & 0.438 & 0.302 & 0.131 \\ 
           &20&  Avg & 2.081 & 3.060 & 3.078 & 5.027 \\ 
             &&  MSE & 0.193 & 0.061 & 0.321 & 0.018 \\ 
           &30&  Avg & 2.090 & 3.017 & 3.083 & 5.017 \\ 
             &&  MSE & 0.194 & 0.008 & 0.305 & 0.011 \\
  \hline
$(40,40)$&20&  Avg & 2.061 & 3.150 & 3.098 & 5.082 \\ 
           &&  MSE & 0.123 & 0.458 & 0.295 & 0.136 \\ 
         &30&  Avg & 2.062 & 3.052 & 3.087 & 5.029 \\ 
           &&  MSE & 0.187 & 0.057 & 0.318 & 0.018 \\ 
         &40&  Avg & 2.046 & 3.017 & 3.061 & 5.014 \\ 
           &&  MSE & 0.177 & 0.007 & 0.244 & 0.006 \\
  \hline
$(80,80)$&20&  Avg & 2.048 & 3.027 & 3.076 & 5.016 \\ 
           &&  MSE & 0.197 & 0.037 & 0.314 & 0.010 \\ 
         &40&  Avg & 2.039 & 3.013 & 3.060 & 5.011 \\ 
           &&  MSE & 0.179 & 0.006 & 0.239 & 0.004 \\ 
         &80&  Avg & 2.028 & 3.007 & 3.031 & 5.006 \\ 
           &&  MSE & 0.114 & 0.002 & 0.147 & 0.001 \\ 
 \hline
\end{tabular}
\end{table}
%%%%%%%%%%%%%%%%%%%%%%%%%%%%%%%%%%
\begin{table}[!ht]
\centering
\caption{Average Values (Avg) and Mean Squared Errors (MSE) of Bayes estimations under LINEX function.}
\label{Res:LINEX}
\begin{tabular}{cccccccccccc}
&&&  \multicolumn{4}{c}{$\nu=-0.5$}&&\multicolumn{4}{c}{$\nu=0.5$}\\
\cmidrule{4-7}\cmidrule{9-12}
$(m,n)$& $r$ & & $\ahat1$ & $\bhat1$ & $\ahat2$ & $\bhat2$ && $\ahat1$ & $\bhat1$ & $\ahat2$ & $\bhat2$ \\ 
  \hline
$(20,20)$&10&  Avg & 2.285 & 3.907 & 3.348 & 5.471 && 1.903 & 2.742 & 2.902 & 4.770 \\ 
           &&  MSE & 0.034 & 0.256 & 0.076 & 0.052 && 0.013 & 0.044 & 0.030 & 0.019 \\ 
         &20&  Avg & 2.212 & 3.229 & 3.225 & 5.124 && 1.970 & 2.920 & 2.948 & 4.937 \\ 
           &&  MSE & 0.040 & 0.015 & 0.063 & 0.004 && 0.020 & 0.008 & 0.035 & 0.003 \\ 
         &30&  Avg & 2.170 & 3.060 & 3.197 & 5.075 && 2.018 & 2.976 & 2.979 & 4.962 \\ 
           &&  MSE & 0.035 & 0.001 & 0.056 & 0.002 && 0.022 & 0.001 & 0.034 & 0.002 \\
\hline
$(40,40)$&20&  Avg & 2.279 & 3.884 & 3.343 & 5.469 && 1.894 & 2.731 & 2.899 & 4.774 \\ 
           &&  MSE & 0.033 & 0.254 & 0.074 & 0.052 && 0.013 & 0.047 & 0.030 & 0.020 \\  
         &30&  Avg & 2.202 & 3.220 & 3.232 & 5.126 && 1.945 & 2.910 & 2.958 & 4.940 \\ 
           &&  MSE & 0.037 & 0.014 & 0.063 & 0.004 && 0.020 & 0.008 & 0.035 & 0.003 \\ 
         &40&  Avg & 2.114 & 3.052 & 3.142 & 5.043 && 1.984 & 2.985 & 2.985 & 4.987 \\ 
           &&  MSE & 0.029 & 0.001 & 0.040 & 0.001 && 0.021 & 0.001 & 0.029 & 0.001 \\ 
\hline
$(80,80)$&20&  Avg & 2.187 & 3.165 & 3.218 & 5.095 && 1.931 & 2.907 & 2.951 & 4.942 \\ 
           &&  MSE & 0.038 & 0.008 & 0.060 & 0.002 && 0.022 & 0.005 & 0.035 & 0.002 \\ 
         &40&  Avg & 2.113 & 3.047 & 3.138 & 5.037 && 1.971 & 2.979 & 2.987 & 4.986 \\ 
           &&  MSE & 0.029 & 0.001 & 0.039 & 0.001 && 0.021 & 0.001 & 0.028 & 0.001 \\ 
         &80&  Avg & 2.063 & 3.015 & 3.073 & 5.013 && 1.995 & 2.998 & 2.991 & 4.999 \\ 
           &&  MSE & 0.017 & 0.000 & 0.021 & 0.000 && 0.014 & 0.000 & 0.018 & 0.000 \\ 
   \hline
\end{tabular}
\end{table}
%%%%%%%%%%%%%%%%%%%%%%%%%%%%%%%%%%
\begin{table}[!ht]
\centering 
\caption{Average Values (Avg) and Mean Squared Errors (MSE) of Bayes estimations under GEL function.}
\label{Res:GEL}
\begin{tabular}{cccccccccccc}
&&&  \multicolumn{4}{c}{$\kappa=-0.5$}&&\multicolumn{4}{c}{$\kappa=0.5$}\\
\cmidrule{4-7}\cmidrule{9-12}
$(m,n)$& $r$ & & $\ahat1$ & $\bhat1$ & $\ahat2$ & $\bhat2$ && $\ahat1$ & $\bhat1$ & $\ahat2$ & $\bhat2$ \\ 
  \hline
$(20,20)$&10&  Avg & 1.982 & 3.006 & 3.033 & 5.013 && 1.808 & 2.699 & 2.896 & 4.881 \\ 
           &&  MSE & 0.004 & 0.006 & 0.004 & 0.001 && 0.006 & 0.009 & 0.004 & 0.001 \\ 
         &20&  Avg & 2.026 & 3.012 & 3.035 & 5.008 && 1.915 & 2.916 & 2.948 & 4.972 \\
           &&  MSE & 0.006 & 0.001 & 0.004 & 0.000 && 0.006 & 0.001 & 0.004 & 0.000 \\  
         &30&  Avg & 2.056 & 3.003 & 3.049 & 5.006 && 1.986 & 2.975 & 2.980 & 4.983 \\ 
           &&  MSE & 0.005 & 0.000 & 0.004 & 0.000 && 0.005 & 0.000 & 0.004 & 0.000 \\
\hline
$(40,40)$&20&  Avg & 1.973 & 2.993 & 3.030 & 5.016 && 1.797 & 2.685 & 2.893 & 4.885 \\ 
           &&  MSE & 0.004 & 0.007 & 0.004 & 0.001 && 0.006 & 0.010 & 0.004 & 0.001 \\ 
         &30&  Avg & 2.003 & 3.002 & 3.044 & 5.011 && 1.884 & 2.905 & 2.958 & 4.974 \\ 
           &&  MSE & 0.006 & 0.001 & 0.004 & 0.000 && 0.007 & 0.001 & 0.004 & 0.000 \\ 
         &40&  Avg & 2.015 & 3.006 & 3.036 & 5.009 && 1.953 & 2.985 & 2.986 & 4.998 \\ 
           &&  MSE & 0.005 & 0.000 & 0.003 & 0.000 && 0.005 & 0.000 & 0.003 & 0.000 \\ 
\hline
$(80,80)$&20&  Avg & 1.988 & 2.985 & 3.034 & 5.001 && 1.869 & 2.902 & 2.950 & 4.971 \\ 
           &&  MSE & 0.007 & 0.001 & 0.004 & 0.000 && 0.008 & 0.001 & 0.004 & 0.000 \\ 
         &40&  Avg & 2.005 & 3.001 & 3.036 & 5.006 && 1.937 & 2.979 & 2.987 & 4.996 \\ 
           &&  MSE & 0.006 & 0.000 & 0.003 & 0.000 && 0.006 & 0.000 & 0.003 & 0.000 \\
         &80&  Avg & 2.012 & 3.004 & 3.018 & 5.005 && 1.979 & 2.998 & 2.991 & 5.002 \\ 
           &&  MSE & 0.003 & 0.000 & 0.002 & 0.000 && 0.004 & 0.000 & 0.002 & 0.000 \\ 
   \hline
\end{tabular}
\end{table}
%%%%%%%%%%%%%%%%%%%%%%%%%%%%%%%%%%
\begin{table}[!ht]
\centering
\caption{The estimated 95\% coverage probabilities (CP) and average lengths of confidence intervals (IL) of EM and Bayes estimations.}
\label{Res:CP-IL}
\begin{tabular}{cclcccccrrrr}
&&&  \multicolumn{4}{c}{CP}&&\multicolumn{4}{c}{IL}\\
\cmidrule{4-7}\cmidrule{9-12}
  $(m,n)$& $r$ && $\ahat1$ & $\bhat1$ & $\ahat2$ & $\bhat2$ && $\ahat1$ & $\bhat1$ & $\ahat2$ & $\bhat2$ \\  
  \hline
$(20,20)$ & 10 &ACI & 97.66 & 97.93 & 93.95 & 96.11 && 4.85 & 7.57 & 5.94 & 10.45 \\ 
          && Boot.t & 84.75 & 78.52 & 90.00 & 74.72 && 2.41 & 2.66 & 3.44 & 3.39 \\
          && Boot.p & 87.56 & 84.02 & 85.84 & 68.25 && 2.51 & 2.86 & 3.01 & 3.07 \\
          &&  BAYES & 99.71 & 85.54 & 96.99 & 90.47 && 2.58 & 9.52 & 3.05 & 6.08 \\
           &20& ACI & 93.03 & 95.97 & 85.53 & 91.30 && 3.88 & 6.42 & 4.66 & 8.94 \\
          && Boot.t & 88.67 & 67.62 & 88.64 & 48.85 && 2.36 & 1.73 & 3.17 & 2.21 \\ 
          && Boot.p & 86.10 & 70.88 & 84.79 & 44.46 && 2.13 & 1.72 & 2.98 & 1.80 \\
          &&  BAYES & 95.62 & 60.34 & 96.53 & 98.40 && 2.07 & 4.26 & 2.58 & 2.78 \\
           &30& ACI & 84.81 & 92.55 & 80.24 & 88.81 && 3.11 & 5.55 & 4.25 & 8.30 \\
          && Boot.t & 97.48 & 70.63 & 93.81 & 78.19 && 5.41 & 1.89 & 4.11 & 15.68 \\
          && Boot.p & 85.21 & 58.96 & 85.80 & 60.66 && 6.61 & 1.91 & 4.23 & 10.15 \\
           && BAYES & 95.68 & 95.20 & 95.75 & 99.80 && 1.84 & 1.90 & 2.33 & 2.12 \\
 \hline
$(40,40)$ &20& ACI & 97.54 & 96.77 & 94.48 & 95.32 && 4.90 & 7.42 & 5.96 & 10.16 \\
         && Boot.t & 79.71 & 74.33 & 79.47 & 69.57 && 1.89 & 2.32 & 2.38 & 3.17 \\
         && Boot.p & 79.75 & 80.29 & 76.80 & 66.97 && 2.04 & 2.64 & 2.18 & 2.93 \\
         &&  BAYES & 99.92 & 86.36 & 96.38 & 86.29 && 2.64 & 9.39 & 3.02 & 6.10 \\
          &30& ACI & 94.74 & 96.10 & 86.44 & 88.64 && 4.02 & 6.34 & 4.71 & 8.29 \\
         && Boot.t & 81.21 & 69.99 & 78.92 & 56.15 && 1.66 & 1.89 & 2.16 & 2.20 \\
         && Boot.p & 80.53 & 73.38 & 75.88 & 53.60 && 1.67 & 1.99 & 2.02 & 2.09 \\
          && BAYES & 97.99 & 75.05 & 95.89 & 91.51 && 2.18 & 4.17 & 2.52 & 2.80 \\
          &40& ACI & 85.72 & 89.49 & 75.68 & 82.72 && 3.02 & 4.97 & 3.58 & 6.75 \\
         && Boot.t & 79.20 & 54.71 & 77.70 & 30.07 && 1.43 & 1.19 & 2.04 & 1.21 \\
         && Boot.p & 77.92 & 54.38 & 74.42 & 28.82 && 1.40 & 1.20 & 1.97 & 1.18 \\
          && BAYES & 94.96 & 82.13 & 95.65 & 90.64 && 1.71 & 1.69 & 2.02 & 1.47 \\
 \hline
$(80,80)$ &20&  ACI & 95.66 & 96.24 & 87.11 & 88.47 && 4.11 & 6.29 & 4.68 & 8.11 \\
          && Boot.t & 69.35 & 62.03 & 61.80 & 48.90 && 1.28 & 1.64 & 1.50 & 2.01 \\ 
          && Boot.p & 68.07 & 65.55 & 61.86 & 50.06 && 1.28 & 1.72 & 1.43 & 1.92 \\
          && BAYES  & 98.76 & 90.20 & 96.00 & 98.01 && 2.27 & 3.57 & 2.52 & 2.46 \\
          &40& ACI  & 87.93 & 90.51 & 76.98 & 79.37 && 3.20 & 4.98 & 3.52 & 6.28 \\
          && Boot.t & 69.67 & 59.05 & 63.91 & 39.39 && 1.14 & 1.33 & 1.44 & 1.48 \\
          && Boot.p & 68.77 & 58.75 & 63.34 & 39.19 && 1.12 & 1.35 & 1.38 & 1.43 \\
          && BAYES  & 96.55 & 91.27 & 95.08 & 86.39 && 1.80 & 1.67 & 1.97 & 1.38 \\
          &80& ACI  & 75.64 & 80.48 & 65.80 & 72.46 && 2.32 & 3.81 & 2.64 & 5.01 \\
          && Boot.t & 65.10 & 39.72 & 60.76 & 19.99 && 0.98 & 0.84 & 1.38 & 0.83 \\
          && Boot.p & 64.12 & 38.50 & 59.75 & 19.41 && 0.96 & 0.84 & 1.35 & 0.82 \\
           && BAYES & 94.61 & 82.52 & 95.07 & 98.39 && 1.32 & 0.80 & 1.52 & 0.69 \\
  \hline
\end{tabular}
\end{table}

\end{document}